\documentclass[leqno,12pt]{amsart} 

\usepackage[utf8x]{inputenc} 
\usepackage{amsmath, nicefrac, a4wide, mathtools}
\usepackage{amssymb}
\usepackage{amsfonts}                       
\usepackage{enumerate}

\usepackage{hyperref}


\theoremstyle{plain} 
\newtheorem{theorem}{\indent\sc Theorem}[section]
\newtheorem{lemma}[theorem]{\indent\sc Lemma}
\newtheorem{corollary}[theorem]{\indent\sc Corollary}
\newtheorem{proposition}[theorem]{\indent\sc Proposition}

\theoremstyle{definition} 
\newtheorem{Definition}[theorem]{\indent\sc Definition}
\newtheorem{remark}[theorem]{\indent\sc Remark}

%

%


\newcommand{\LieSO}{{\mathrm{SO}}}

\newcommand{\lieA}{{\mathfrak{a}}}

\newcommand{\lieG}{{\mathfrak g}}
\newcommand{\lieH}{{\mathfrak h}}
\newcommand{\lieK}{{\mathfrak k}}

\newcommand{\lieP}{{\mathfrak p}}

\newcommand{\lieZ}{{\mathfrak z}}

\newcommand{\lieSO}{{\mathfrak{so}}}

\newcommand{\Ad}{{\mathrm{Ad}}}

\newcommand{\ad}{{\mathrm{ad}}}

\newcommand{\Exp}{{\mathrm{Exp}}}

\newcommand{\Root}{{\mathcal{R}}}

\DeclareMathOperator{\aff}{aff}

\def\R{{\mathbb R}}
\def\Sphere{{S}}
\def\N{{\mathbb N}}

\def\Z{{\mathbb Z}}
\def\C{{\mathbb C}}

\title[Submanifolds of Clifford type]
{Extrinsically Symmetric Spaces, Submanifolds of Clifford Type and a Theorem of Harish-Chandra}
\author[Eschenburg]{Jost-Hinrich Eschenburg}
\email{jost-hinrich.eschenburg@math.uni-augsburg.de}
\author[Heintze]{Ernst Heintze}
\email{ernst.heintze@math.uni-augsburg.de}
\author[Quast]{Peter Quast}
\email{peter.quast@math.uni-augsburg.de}
\address{Institut f\"ur Mathematik, Universit\"at Augsburg, 86135 Augsburg, Germany}

\date{\today}

\subjclass[2020]{53C35, 53C40}

\keywords{extrinsically symmetric spaces, Clifford tori, Clifford type, strongly orthogonal roots}
\dedicatory{To the memory of Joseph A.\ Wolf.}

\begin{document}

\begin{abstract}
We prove that a compact,  intrinsically symmetric submanifold of a Euclidean space is extrinsically symmetric if and only if its maximal tori are Clifford tori in the ambient space. Moreover,  we show that this result can be used to give a geometric proof of a result of Harish-Chandra on strongly orthogonal roots in semisimple Lie algebras.
\end{abstract}

\maketitle
\section*{Introduction}

A closed submanifold $X$ of a Euclidean space $E$ is called of \emph{Clifford type} if every geodesic in $X$ is contained in a totally geodesic submanifold of $X$ which is a Clifford torus in $E.$ For example, $X$ is of Clifford type if all its geodesics are planar circles in $E.$
It is easy to see (Proposition \ref{PROP: Clifford type implies extrinsic symmetry}) that submanifolds of Clifford type are compact \emph{extrinsically symmetric spaces}, that is they are compact submanifolds which are invariant under the orthogonal reflections along their affine normal spaces. By Theorem \ref{THM: Maximal tori of ess are Clifford} also the converse holds. Surprisingly, this result is closely related to a theorem of \textsc{Harish-Chandra} on the existence of strongly orthogonal roots of noncompact type in semisimple Lie algebras. We
show in fact how \textsc{Harish-Chandra}'s result can be deduced from Theorem \ref{THM: Maximal tori of ess are Clifford} by an explicit geometric construction and we indicate also a proof of the opposite direction.

\section{Clifford tori}
\label{SECT: Clifford tori}

Let $r\geq 0$ be an integer and $\rho_1,\dots,\rho_r$ be positive real numbers. We call
$$C:=C_{\rho_1\cdots \rho_r}=\{z=(z_1,\dots, z_r)\in\C^r:\; |z_j|=\rho_j\; \text{for all}\; j=1,\dots, r\}$$
a \emph{standard Clifford torus}, generalizing the classical case $r=2$ and $\rho_1=\rho_2=1.$ If $r=0$ we have $C=\{0\}.$ The inner product
$\langle z,w\rangle=\mathrm{Re}\Big(\sum\limits_{j=1}^r z_j\overline{w_j}\Big)$ turns $\C^r$ into a Euclidean space and $C$ becomes an extrinsically symmetric submanifold of $\C^r$ as round circles in planes are extrinsically symmetric and $C$ in $\C^r$
is a product of such submanifolds. For each $w=(w_1,\dots, w_r)\in C$ the circles
$$C_k(w)=\{z\in\C^r:\; |z_k|=\rho_k\; \text{and}\; z_j=w_j\; \text{for all}\; k\neq j\},$$
$k=1,\dots r,$
are called \emph{generating circles} of $C$ at $w.$ They are contained in the pairwise orthogonal affine planes
$\C^r_k(w):=\{z\in\C^r:\; z_j=w_j\; \text{for all}\; k\neq j\}$ and $C$ is isometric to their product. \par

More generally, a submanifold $C$ of a Euclidean space $E$ is called a \emph{Clifford torus} if it is the image of a standard
Clifford torus $C'$ in $\C^r$ under an affine isometric (not necessarily surjective) map $\varphi:\C^r\to E.$
Fixing $\varphi$ we have for each $p=\varphi(w)\in C$ the generating circles $C_k(p):=\varphi(C'_k(w))$ for $k=1,\dots, r$ lying in pairwise orthogonal affine planes
$\varphi(\C^r_k(w))$.
Up to order these generating circles of $C$ are independent of $\varphi$ as they only depend on the inner geometry of $C.$ In fact, the unit lattice
$\Gamma=\{v\in T_pC:\; \Exp_p(v)=p\},$ where $\Exp_p$ denotes the Riemannian exponential map of $C$ at $p,$ is rectangular, that is
$\Gamma$ has an orthogonal basis. Indeed, $\Gamma$ is the direct sum of the unit lattices of the generating circles $C_k(p),\; k=1\dots r.$
Thus uniqueness of the generating circles up to order follows from:

\begin{lemma}\label{LEM: rect basis unit lattice unique}
 Let $\Gamma$ be a rectangular lattice in a Euclidean vector space $V.$ Then an orthogonal basis of $\Gamma$ is unique up to signs and order.
\end{lemma}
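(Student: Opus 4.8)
The plan is to fix two orthogonal bases and reduce the statement to a clean assertion about the integral change-of-basis matrix. After replacing $V$ by $\spann_\R\Gamma$ (which leaves orthogonality unaffected), let $r=\rk\Gamma$ and let $e_1,\dots,e_r$ and $f_1,\dots,f_r$ be two orthogonal $\Z$-bases of $\Gamma$, with $\lambda_i=\langle e_i,e_i\rangle>0$ and $\mu_j=\langle f_j,f_j\rangle>0$. Writing $f_j=\sum_i a_{ij}e_i$, the matrix $A=(a_{ij})$ lies in $\LieGL_r(\Z)$, and crucially so does its inverse $B=(b_{ij})=A^{-1}$, since both express one lattice basis in terms of the other. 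Because the $e_i$ are pairwise orthogonal, the Gram matrix of the $f_j$ equals $A^{\top}DA$ with $D=\mathrm{diag}(\lambda_1,\dots,\lambda_r)$; since the $f_j$ are pairwise orthogonal as well, this Gram matrix is $D'=\mathrm{diag}(\mu_1,\dots,\mu_r)$. Thus the whole problem is encoded in the single identity $A^{\top}DA=D'$ with $A\in\LieGL_r(\Z)$ and $D,D'$ positive diagonal, and the goal becomes to show that $A$ is a signed permutation matrix.

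The main device I would use is a symmetrization. Setting $O=D^{1/2}AD'^{-1/2}$ one computes $O^{\top}O=D'^{-1/2}(A^{\top}DA)D'^{-1/2}=I$, so $O$ is orthogonal. Reading off entries gives $O_{ij}=a_{ij}\sqrt{\lambda_i/\mu_j}$, while the same computation applied to $O^{-1}=O^{\top}=D'^{1/2}BD^{-1/2}$ yields $O_{ij}=b_{ji}\sqrt{\mu_j/\lambda_i}$. Multiplying the two expressions, the radicals cancel and I obtain the key relation $O_{ij}^2=a_{ij}b_{ji}$. In particular each $O_{ij}^2$ is a \emph{nonnegative integer}, because $a_{ij},b_{ji}\in\Z$.

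Finally I would exploit orthogonality of $O$ combinatorially. Each row of $O$ is a unit vector, so $\sum_j O_{ij}^2=1$; being a sum of nonnegative integers equal to $1$, exactly one term is $1$ and the rest vanish, and the same holds column-wise. Hence $O$ has precisely one nonzero entry, equal to $\pm1$, in each row and column, i.e.\ $O$ is a signed permutation matrix. Since $\lambda_i,\mu_j>0$, the formula $O_{ij}=a_{ij}\sqrt{\lambda_i/\mu_j}$ shows that $A$ has the same zero pattern as $O$, and at a nonzero position $a_{ij}b_{ji}=O_{ij}^2=1$ with $a_{ij},b_{ji}\in\Z$ forces $a_{ij}=\pm1$. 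Thus $A$ is a signed permutation matrix, which says exactly that $f_j=\pm e_{\sigma(j)}$ for some permutation $\sigma$, proving uniqueness up to signs and order.

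The step I expect to be the main obstacle is discovering the right reformulation: a direct attack trying to bound the entries of $A$ from $A^{\top}DA=D'$ alone is awkward, since the weights $\lambda_i,\mu_j$ are unknown and only the off-diagonal vanishing is available. The crux is to bring the integrality of both $A$ \emph{and} $A^{-1}$ into play simultaneously, which is precisely what the identity $O_{ij}^2=a_{ij}b_{ji}$ packages; once it is in hand, the remaining argument is purely combinatorial. (A more pedestrian alternative would be induction on $r$ via shortest lattice vectors, but handling ties in length makes it noticeably messier, so I would favor the algebraic route above.)
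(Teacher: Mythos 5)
Your proof is correct, but it takes a genuinely different route from the paper's. The paper argues via shortest vectors: in an orthogonal basis $b_1,\dots,b_r$ one has $\big\|\sum_j \alpha_j b_j\big\|^2=\sum_j \alpha_j^2\|b_j\|^2$, so the shortest nonzero elements of $\Gamma$ are exactly $\pm b_j$ for those $b_j$ of minimal length among the basis vectors; this characterizes those basis vectors intrinsically (up to sign and order), and the lemma follows by induction on $\dim V$, passing to the orthogonal complement of their span. That is precisely the ``pedestrian alternative'' you set aside, and the tie issue you worried about is harmless: ties are absorbed by taking \emph{all} shortest basis vectors simultaneously as one unordered set, so the paper's argument is only a few lines. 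Your argument instead encodes both bases in a unimodular matrix $A$ with $A^{\top}DA=D'$, symmetrizes to the orthogonal matrix $O=D^{1/2}AD'^{-1/2}$, and exploits integrality of both $A$ and $A^{-1}$ through the identity $O_{ij}^2=a_{ij}b_{ji}\in\Z_{\geq 0}$, after which the row sums $\sum_j O_{ij}^2=1$ force $O$, and hence $A$, to be a signed permutation matrix. Each step checks out (in particular $O_{ij}=a_{ij}\sqrt{\lambda_i/\mu_j}=b_{ji}\sqrt{\mu_j/\lambda_i}$ is right). What your approach buys is a one-shot, induction-free proof that makes the role of unimodularity explicit and yields the normal form of the transition matrix directly; what the paper's approach buys is brevity and a stronger intrinsic statement, namely that the orthogonal basis can be recovered greedily from the lattice itself via successive shortest vectors.
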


\begin{proof}
 Let $B=\{b_1,\dots, b_r\}$ be an orthogonal basis of $\Gamma,$ that is
 $\Gamma=\big\{\sum\limits_{j=1}^r\alpha_j b_j:\; \alpha_j\in\Z\; \text{for all}\; j=1,\dots, r\big\}.$ Assuming that $b_1\dots, b_s,\; s\leq r,$ are the shortest elements in $B$ then
 $\pm b_1,\dots, \pm b_s$ are the shortest element in $\Gamma.$ Thus $b_1,\dots, b_s\in\Gamma$ are unique up to sign and order. The Lemma
 now follows by induction on the dimension of $V.$
\end{proof}

As the standard Clifford torus $C'$ is extrinsically symmetric in $\C^r,$ the Clifford torus  $C=\varphi(C')$ is extrinsically symmetric
in $\varphi(\C^r)\subset E,$ which is the affine hull $\aff(C)$ of $C,$ and thus extrinsically symmetric in $E.$ Since at each $p\in C$ the affine hull $\aff(C)$ splits orthogonally as the direct sum $\bigoplus_{k=1}^r \aff(C_k(p))$ we have:

\begin{lemma}\label{LEM: Reflections on generating circles}
Let $p\in C$ and let $\psi$ be an isometry of $E$ with $\psi(p)=p.$
If $\psi$ induces a reflection on each generating circle $C_k(p)$ then $\psi$ leaves $C$ invariant.
\end{lemma}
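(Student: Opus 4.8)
The plan is to identify the restriction of $\psi$ to the affine hull $\aff(C)$ with the extrinsic symmetry $\sigma_p$ of $C$ at $p$, and then invoke the fact that $\sigma_p$ leaves $C$ invariant. Recall that $C$ is extrinsically symmetric, so the reflection $\sigma_p$ of $E$ along the affine normal space of $C$ at $p$ preserves $C$; since each generating circle $C_k(p)$ is a totally geodesic circle of $C$ through $p$, the map $\sigma_p$ reverses it, that is, it induces on $C_k(p)$ the geodesic reflection fixing $p$. If I can show $\psi|_{\aff(C)} = \sigma_p|_{\aff(C)}$, then $\psi(C) = \sigma_p(C) = C$ and the lemma follows.

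First I would examine the action of $\psi$ on a single generating circle. By hypothesis $\psi$ maps $C_k(p)$ to itself, hence it preserves the plane $P_k := \aff(C_k(p))$, and the restriction $\psi|_{P_k}$ is a plane isometry preserving the circle $C_k(p)$. Such an isometry fixes the center $c_k$ of $C_k(p)$, and it also fixes $p$; since $p$ lies on a circle of positive radius $\rho_k$, we have $p \neq c_k$. A plane isometry fixing the two distinct points $p$ and $c_k$ is either the identity or the reflection across the line $\overline{p\,c_k}$. As $\psi$ induces a nontrivial reflection on $C_k(p)$, it must be the latter. The very same description applies to $\sigma_p$, so $\psi$ and $\sigma_p$ agree on every plane $P_k$.

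Next I would pass from the individual planes to all of $\aff(C)$. Because $\psi$ fixes $p$ and maps each $P_k$ onto itself, it preserves the affine span of the $P_k$; by the orthogonal splitting $\aff(C) = \bigoplus_{k=1}^r P_k$ recorded above (with $p$ as common base point) this span is exactly $\aff(C)$, so $\psi$ preserves $\aff(C)$. Translating $p$ to the origin turns $\psi|_{\aff(C)}$ and $\sigma_p|_{\aff(C)}$ into linear isometries that coincide on each orthogonal summand $P_k - p$; since these summands span $\aff(C) - p$, the two maps are equal. Hence $\psi|_{\aff(C)} = \sigma_p|_{\aff(C)}$ and therefore $\psi(C) = C$. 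The only point requiring care is the elementary plane-isometry argument pinning down $\psi|_{P_k}$, together with the bookkeeping ensuring that agreement on the orthogonal summands forces agreement on their direct sum; both the hypothesis that $\psi$ induces a reflection on each $C_k(p)$ and the orthogonal decomposition of $\aff(C)$ enter precisely here.
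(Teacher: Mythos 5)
Your proof is correct, but it follows a genuinely different route from the paper's. The paper treats the lemma as an immediate consequence of the observation stated directly before it: at $p$ the affine hull splits orthogonally as $\aff(C)=\bigoplus_{k=1}^{r}\aff(C_k(p))$ and $C$ is exactly the product of its generating circles with respect to this splitting, so any isometry fixing $p$ that maps each $C_k(p)$ onto itself respects the splitting and hence maps the product $C$ onto itself; no identification of $\psi$ with any particular map is needed, and in fact the hypothesis that the induced maps are \emph{reflections} is never used -- mere invariance of each generating circle suffices. You instead pin $\psi$ down completely: the classification of plane isometries fixing two distinct points ($p$ and the center $c_k$), combined with the nontriviality of a reflection, forces $\psi$ to agree on each plane $\aff(C_k(p))$ with the extrinsic symmetry $\sigma_p$ (the reflection of $E$ along the affine normal space of $C$ at $p$), and since linear isometries that agree on subspaces spanning $\aff(C)-p$ agree everywhere, you get $\psi|_{\aff(C)}=\sigma_p|_{\aff(C)}$ and conclude via $\psi(C)=\sigma_p(C)=C$, using the extrinsic symmetry of Clifford tori established earlier in the section. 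Each step you invoke is sound (the center of $C_k(p)$ is fixed; a plane isometry fixing $p\neq c_k$ is the identity or the reflection in the line through them; $\sigma_p$ restricts to the geodesic symmetry of $C$ and therefore reverses each generating circle). The trade-off: your argument yields a sharper structural conclusion -- any $\psi$ satisfying the hypotheses acts on $\aff(C)$ as \emph{the} extrinsic symmetry at $p$, a uniqueness statement the paper does not record -- but it genuinely needs both the stronger ``nontrivial reflection'' hypothesis and the previously established extrinsic symmetry of $C$, whereas the paper's product argument is shorter, self-contained, and proves the lemma under the weaker assumption that each generating circle is merely preserved.
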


\begin{lemma}\label{LEM: Fixed point sets of isometries Clifford tori}
Each connected component of the fixed point set of an isometry $f$ of $C$ is a Clifford torus in $E.$
\end{lemma}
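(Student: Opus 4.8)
The plan is to reduce everything to the flat geometry of $C$ together with the extra rigidity coming from the embedding $\varphi:\C^r\to E$. If $f$ has no fixed point the statement is vacuous, so I would fix a fixed point $p_0=\varphi(w)$ and set $A:=\diff f_{p_0}$, an orthogonal map of $T_{p_0}C$. Since $C$ is flat and complete and $f$ is an isometry fixing $p_0$, one has $f(\Exp_{p_0}x)=\Exp_{p_0}(Ax)$. Writing $\Gamma\subset T_{p_0}C$ for the unit lattice, a point $\Exp_{p_0}x$ is fixed exactly when $(A-\Id)x\in\Gamma$; as $\Gamma$ is discrete this forces $x\in V:=\ker(A-\Id)$ near $0$, while conversely $\Exp_{p_0}(V)\subset\mathrm{Fix}(f)$. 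Hence the connected component through $p_0$ is the subtorus $T:=\Exp_{p_0}(V)$. Because in the flat trivialization the differential of $f$ equals $A$ at every point, the same description holds at every fixed point, so it suffices to show that this single $T$ is a Clifford torus.

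Next I would exploit that $\Gamma$ is rectangular with orthogonal basis $\{b_k\}$, $b_k=\ell_k e_k$, $\ell_k=2\pi\rho_k$, where $e_k$ is the unit tangent to the $k$-th generating circle $C_k(p_0)$. As $A$ preserves $\Gamma$, Lemma \ref{LEM: rect basis unit lattice unique} shows that $A$ permutes the $b_k$ up to sign, i.e.\ $Ae_k=\pm e_{\sigma(k)}$ with $\rho_{\sigma(k)}=\rho_k$. Decomposing $\sigma$ into cycles, a short computation shows that $V=\ker(A-\Id)$ is the orthogonal direct sum of lines $\R v^{(\alpha)}$, one for each cycle $\alpha$ on which the product of the occurring signs is $+1$; here $v^{(\alpha)}=\sum_i\pm e_{k_i}$ is supported on the coordinates $k_1,\dots,k_m$ of the cycle and has all coefficients of equal modulus. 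Since the $\rho_{k_i}$ in a cycle are equal to a common value $\rho_\alpha$, the vector $u^{(\alpha)}:=\ell_\alpha\sum_i\pm e_{k_i}$ (with $\ell_\alpha=2\pi\rho_\alpha$) lies in $\Gamma$, and --- using that the $v^{(\alpha)}$ have pairwise disjoint supports --- one checks $V\cap\Gamma=\bigoplus_\alpha\Z u^{(\alpha)}$, a rectangular lattice. Thus $T$ is intrinsically the Riemannian product of the circles $\Exp_{p_0}(\R v^{(\alpha)})$.

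The decisive point is to see that each generating circle $\Exp_{p_0}(\R v^{(\alpha)})$ is a planar circle lying in an affine plane of $E$, and that these planes are mutually orthogonal. I would verify this inside $\C^r$, where the geodesic from $w$ in the unit direction $v=\sum_k v_k e_k$ has $k$-th component $z_k(t)=w_k\,e^{\,i(v_k/\rho_k)t}$, so its angular frequency is $|v_k|/\rho_k$. For $v=v^{(\alpha)}$ the moduli $|v_{k_i}|$ are equal and the radii $\rho_{k_i}$ are equal, hence all these frequencies agree in modulus; after applying complex conjugation in those coordinates where the sign is negative --- a real isometry of $\C^r$ preserving planarity --- the curve becomes $t\mapsto e^{\,i\omega t}w'$, tracing a round circle in the real $2$-plane $\mathrm{span}_\R(w',iw')$. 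As distinct cycles use disjoint complex coordinates, these planes are orthogonal, and since the $k$-th complex coordinate of the geodesic in direction $\sum_\alpha t_\alpha v^{(\alpha)}$ depends only on $t_\alpha$, the embedding of $T$ is the product of these planar circles in orthogonal planes. By definition this exhibits $T=\varphi(T')$ for a standard Clifford torus $T'$, so $T$ is a Clifford torus in $E$.

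I expect the main obstacle to be exactly this planarity step: one must package the sign pattern of each cycle into the complex-conjugation normalization so that the several rotating coordinates synchronize into a single complex exponential, and simultaneously check both that the resulting circles sit in mutually orthogonal planes and that they assemble into a genuine \emph{product} --- i.e.\ that $V\cap\Gamma$ is rectangular rather than merely containing the obvious product sublattice. The fixed-point hypothesis is essential here, since it is precisely the lattice-preservation of $A$ that forces the radii, and hence the frequencies, to coincide along each generating direction.
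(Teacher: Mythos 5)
Your proposal is correct and takes essentially the same route as the paper: both use Lemma \ref{LEM: rect basis unit lattice unique} to see that the differential of $f$ at a fixed point is a signed permutation of the orthogonal lattice directions, decompose that permutation into cycles, observe that a cycle contributes a fixed line exactly when its sign product is $+1$, and exploit the equality of the radii along a cycle to conclude that this line exponentiates to a planar circle (orthogonal across distinct cycles). The only difference is organizational: the paper reduces to a single cycle by splitting $C$ into an $f$-invariant orthogonal product of subtori, whereas you compute $\ker(f_*-\Id)$ and verify the planarity and product structure globally.
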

\begin{proof}
Let $p\in C$ be fixed by $f$ and let $\Gamma\subset T_pC$ be the unit lattice of $C$ at $p.$
By Lemma \ref{LEM: rect basis unit lattice unique} there exists an orthogonal basis $b_1,\dots, b_r$ of $T_pC$ unique up to sign and permutations with $\Gamma=\mathrm{span}_{\Z}\{b_1,\dots, b_r\}.$
The differential $f_*$ of $f$ at $p$ is an orthogonal map of $T_pC$ which preserves $\Gamma_p$ and therefore acts on $\{\R b_1,\dots, \R b_r\}$
by permutation. We decompose $I\coloneqq \{1,...,r\}$ as $I=\bigsqcup\limits_{j=1}^k I_j$ into non-empty subsets $I_j$ such that $f_*$ acts
cyclically on the sets $\{\R b_s: s\in I_j\},\; j\in\{1,\dots, k\}.$ Then
$C$ is the product of the Clifford tori $T_j=\Exp_p(\mathrm{span}_{\R}\{b_s: s\in I_j\}),\; j=1,\dots, k,$ which are contained in pairwise
orthogonal affine subspaces and are invariant under $f.$
It therefore suffices to assume that $f_*$ acts cyclically on $\{\R b_1,\dots, \R b_r\}$
and
$f_*(b_j)=b_{j+1}$ for all $j\in\{1,\dots, r-1\}.$ Thus $f_*(b_r)\in\{-b_1, b_1\}.$
If $f_*(b_r)=-b_1,$ then $f_*$ has no fixed vector and $p$ is an isolated fixed point of $f.$ If $f_*(b_r)=b_1,$ then the fixed point set of
$f_*$ is $\R(b_1+\dots +b_r)$ and the connected component of the fixed point set of $f$ through $p$ is
the planar circle $\Exp_p(\R(b_1+\dots +b_r)).$
\end{proof}
\section{Submanifolds of Clifford type}
\label{SECT: Submanifolds of Clifford type}

\begin{Definition}
An (embedded) submanifold $X$ of a Euclidean space $E$ is called \emph{of Clifford type}
if every geodesic of $X$ lies in a totally geodesic submanifold $C$ of $X$
which is a Clifford torus in $E$.
\end{Definition}

For brevity we call a totally geodesic submanifold in $X\subset E$ which is a Clifford torus in $E$ a 
\emph{totally geodesic Clifford torus} in $X.$

\begin{proposition}
\label{PROP: Clifford type implies extrinsic symmetry}
 A connected submanifold $X\subset E$ of Clifford type is a compact extrinsically symmetric space.
\end{proposition}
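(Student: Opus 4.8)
The plan is to produce, for each point $p\in X$, the candidate extrinsic symmetry $s_p$, namely the reflection of $E$ in the affine normal space $p+\nu_pX$; thus $s_p$ fixes $p$, acts as $-\Id$ on $T_pX$ and as $+\Id$ on $\nu_pX$, which is exactly the reflection along the affine normal space that the definition of extrinsic symmetry demands. The whole proposition then reduces to showing that $s_p(X)=X$ for every $p$ together with compactness of $X$. The decisive simplification is that it suffices to prove that $s_p$ leaves invariant every totally geodesic Clifford torus $C\subset X$ through $p$: since $X$ is of Clifford type, every geodesic emanating from $p$ lies in such a torus (which then contains $p$), and once $X$ is known to be complete the map $\Exp_p$ is onto by Hopf--Rinow, so $X$ is the union of these tori. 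Invariance of each torus will force $s_p(X)\subseteq X$, and $s_p^2=\Id$ upgrades this to equality.

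First I would establish completeness and compactness. Each geodesic of $X$ lies in a Clifford torus $C\subset X$, which is compact, so every geodesic extends to all of $\R$ and $X$ is geodesically complete. For boundedness I observe that a generating circle $C_k(p)$, being a coordinate circle of the flat torus $C$ and hence a geodesic of the totally geodesic submanifold $C\subset X$, is a geodesic of $X$; therefore its acceleration in $E$ is the value $\mathrm{II}(e,e)$ of the second fundamental form of $X$ on the unit tangent $e$ of $C_k(p)$, and its radius equals $1/|\mathrm{II}(e,e)|$. Because every tangent direction lies in some torus, and the second fundamental form of an orthogonal product of circles is the diagonal sum $\mathrm{II}(v,v)=\sum_k v_k^2\kappa_k n_k$ with mutually orthogonal unit vectors $n_k$, one gets $\mathrm{II}(v,v)\neq 0$ for all $0\neq v\in T_pX$; compactness of the unit sphere in $T_pX$ then yields a uniform bound $|\mathrm{II}(v,v)|\geq c>0$, so all generating radii at $p$ are $\leq 1/c$. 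As a torus through $p$ has at most $\dim X$ generating circles, its diameter is uniformly bounded, whence $X$ has finite diameter and, being complete, is compact.

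The heart of the argument is the invariance $s_p(C)=C$, for which I would invoke Lemma \ref{LEM: Reflections on generating circles}: it is enough to check that $s_p$ induces a reflection on each generating circle $C_k(p)$. The tangent line $T_pC_k(p)$ lies in $T_pX$, so $s_p$ reverses it. On the other hand, by the computation above the curvature direction $n_k$ of $C_k(p)$, pointing from $p$ to the centre of the circle, is normal to $X$ and hence is fixed by $s_p$. Since the affine plane $\aff(C_k(p))$ equals $p+\mathrm{span}\{T_pC_k(p),\,n_k\}$, the isometry $s_p$ preserves this plane and restricts on it to the reflection across the diameter line $p+\R n_k$, which is exactly a reflection of the circle $C_k(p)$. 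Lemma \ref{LEM: Reflections on generating circles} then gives $s_p(C)=C$, completing the reduction and thereby the proof that $X$ is extrinsically symmetric.

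I expect the main obstacle to be precisely this geometric verification: that the curvature vector of each generating circle is normal to the ambient $X$, and that the tangent and curvature directions span the circle's plane, so that the intrinsic geodesic symmetry of the flat torus $C$ matches exactly the extrinsic reflection $s_p$ determined by the normal space of $X$. The remaining points --- geodesic completeness from compactness of the tori, the uniform lower bound on $\mathrm{II}$, and the passage from local to global invariance via surjectivity of $\Exp_p$ --- are then routine.
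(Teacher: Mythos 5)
Your proof is correct, and its core is exactly the paper's argument: reduce $r_p(X)=X$ to invariance of each totally geodesic Clifford torus $C$ through $p$ (every point of $X$ lies on a geodesic from $p$, hence in such a torus), invoke Lemma \ref{LEM: Reflections on generating circles}, and verify its hypothesis by observing that a generating circle $C_k(p)$ is a geodesic of $X$, so its acceleration is the second fundamental form evaluated on its tangent and hence normal to $X$; therefore the reflection along the affine normal space preserves $\aff(C_k(p))$ and induces the geodesic symmetry on the circle.

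The one place where you genuinely diverge is the compactness step. The paper disposes of it in a line: a complete noncompact manifold would contain a distance-minimizing geodesic ray, which is impossible because every geodesic of $X$ lies in a compact torus. You instead identify $\mathrm{II}_X(v,v)$ for $v$ tangent to a torus with the second fundamental form of a product of circles (valid, since $C$ is totally geodesic in $X$, so $\mathrm{II}_{C\subset E}=\mathrm{II}_{X\subset E}$ on $T_pC$), deduce $\mathrm{II}(v,v)\neq 0$ for all $v\neq 0$, get a uniform lower bound $c>0$ on the unit sphere of $T_pX$, bound all generating radii at $p$ by $1/c$, and conclude that $X$ has finite diameter, hence is compact by Hopf--Rinow. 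This is correct but more laborious than necessary; its only compensation is quantitative, namely a diameter bound for $X$ in terms of the second fundamental form at a single point, which the ray argument does not provide.
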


\begin{proof} Since Clifford tori are compact $X$ is geodesically complete.
As there are no
(distance minimizing) geodesic rays in a torus $X$ is compact.\par
Let $p\in X$ and let $r_p:E\to E$ be the reflection along the affine normal space $p+N_pX$ 
of $X$ at $p$. Since any point of $X$ can be joined by a geodesic to $p$ such a geodesic lies in a totally geodesic Clifford torus $C\subset X$ containing $p.$ In order to prove $r_p(X)=X$ it suffices to show that $r_p(C)=C.$ 
In view of Lemma \ref{LEM: Reflections on generating circles} we have to see that $r_p$ induces the geodesic symmetry at $p$ on each generating circle $C_k(p)$ for $k=1,\dots, \dim(C).$ Now the second derivative of a geodesic parameterizing $C_k(p)$ is normal to $X.$ Thus $r_p$ leaves $\aff(C_k(p))$ invariant and induces on $C_k(p)$ the geodesic symmetry at $p.$
\end{proof}

As tori contain dense geodesics we conclude:

\begin{corollary}\label{COR: max tori are Clifford}
 Maximal tori of connected submanifolds of Clifford type are Clifford tori.
\end{corollary}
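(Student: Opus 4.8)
The plan is to exploit the density of a suitable geodesic inside a maximal torus together with the closedness of Clifford tori. Let $T$ be a maximal torus of $X$, that is, a maximal flat totally geodesic submanifold; since $X$ is compact by Proposition \ref{PROP: Clifford type implies extrinsic symmetry}, $T$ is a compact flat and hence a torus. First I would choose a geodesic $\gamma$ of $X$ lying in $T$ whose image is dense in $T$, which exists because every torus contains a dense geodesic (this is the observation flagged just before the statement).

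By the definition of Clifford type, this geodesic $\gamma$ lies in a totally geodesic Clifford torus $C\subset X$. Since Clifford tori are compact, $C$ is closed in $X$, so the closure of the image of $\gamma$ is contained in $C$; as $\gamma$ is dense in $T$, this yields the inclusion $T\subseteq C$.

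Finally I would invoke maximality. The Clifford torus $C$ is flat, being locally a product of circles, and totally geodesic in $X$ by assumption, so it is a flat totally geodesic submanifold of $X$ containing the maximal flat $T$. Maximality of $T$ then forces $T=C$, whence $T$ is a Clifford torus in $E$, as claimed. The argument is short and I expect no serious obstacle; the only point requiring care is the passage from the dense geodesic to the inclusion $T\subseteq C$, which relies on $C$ being closed in $E$ — guaranteed by the compactness of Clifford tori recorded in Section \ref{SECT: Clifford tori} — and on reading ``maximal torus'' as ``maximal flat totally geodesic submanifold'', so that an inclusion into another such flat is automatically an equality.
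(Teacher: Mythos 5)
Your proof is correct and is exactly the argument the paper intends: the corollary is stated right after the remark ``as tori contain dense geodesics,'' and your filling-in (dense geodesic in $T$ lies in a closed totally geodesic Clifford torus $C$, so $T\subseteq C$ by density, and maximality of $T$ as a flat forces $T=C$) is precisely that implicit reasoning.
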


An advantage of the notion of submanifolds of Clifford type is its simple definition. It is sometimes possible to carry over results about Clifford tori to submanifolds of Clifford type. Proposition \ref{PROP: fixed sets isometries Clifford type} illustrates this principle
with Lemma \ref{LEM: Fixed point sets of isometries Clifford tori}. Recall that a non-empty connected component of an involutive isometry
of a Riemannian manifold is called a \emph{reflective subspace}.

\begin{proposition}\label{PROP: fixed sets isometries Clifford type}
 A reflective subspace $Y$ of a submanifold $X\subset E$ of Clifford type is itself of Clifford type.
\end{proposition}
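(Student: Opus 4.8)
The plan is to reduce the statement to the single-torus case already handled in Lemma~\ref{LEM: Fixed point sets of isometries Clifford tori}. Concretely, I would show that every maximal torus of $Y$ is a Clifford torus in $E$; since $Y$ is a compact symmetric space (see below), every geodesic of $Y$ lies in some maximal torus of $Y$, and maximal tori are totally geodesic, so this is exactly what is needed for $Y$ to be of Clifford type. Let $f$ be the involutive isometry of $X$ having $Y$ as a connected component of its fixed point set $\mathrm{Fix}(f)$. As such a component, $Y$ is totally geodesic in $X$; and since $X$ is a compact symmetric space by Proposition~\ref{PROP: Clifford type implies extrinsic symmetry}, the totally geodesic submanifold $Y$ is itself a compact symmetric space and its geodesics are geodesics of $X$.

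Now fix a maximal torus $T$ of $Y$ and a point $p\in T$, so $f(p)=p$. The differential $f_*$ is an involutive orthogonal transformation of $T_pX$; identifying $T_pX$ with the $\lieP$-part of the Cartan decomposition at $p$ and splitting it into the $(\pm1)$-eigenspaces $\lieP^+\oplus\lieP^-$ of $f_*$, we have $T_pY=\lieP^+$, and $T_pT=\lieA^+$ is a maximal abelian subspace of $\lieP^+$. The decisive step, which I expect to be the main obstacle, is to enlarge $T$ to an \emph{$f$-invariant} maximal torus of $X$. For this I would set $\lieA:=\lieA^+\oplus\lieA^-$, where $\lieA^-$ is a maximal abelian subspace of the centralizer $\{w\in\lieP^-:[w,\lieA^+]=0\}$. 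By construction $\lieA$ is abelian and $f_*$-invariant. To see that it is maximal abelian in $\lieP$, take $w\in\lieP$ commuting with $\lieA$; since $\lieA$ is $f_*$-invariant, $f_*w$ also commutes with $\lieA$, so both components $w^{\pm}=\tfrac12(w\pm f_*w)\in\lieP^{\pm}$ commute with $\lieA$. Then $w^+$ commutes with $\lieA^+$, hence $w^+\in\lieA^+$ by maximality in $\lieP^+$; and $w^-$ lies in $\{w'\in\lieP^-:[w',\lieA^+]=0\}$ and commutes with $\lieA^-$, hence $w^-\in\lieA^-$ by maximality there; thus $w\in\lieA$. Since $f_*\lieA=\lieA$ and $f(p)=p$, the set $\hat T:=\Exp_p(\lieA)$ is an $f$-invariant maximal torus of $X$ containing $T$.

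It then remains to apply the torus case. By Corollary~\ref{COR: max tori are Clifford} the maximal torus $\hat T$ is a Clifford torus in $E$, and $f|_{\hat T}$ is an isometry of it, so by Lemma~\ref{LEM: Fixed point sets of isometries Clifford tori} the connected component $F$ of the fixed point set of $f|_{\hat T}$ through $p$ is a Clifford torus in $E$. I would finish by identifying $F$ with $T$: on one hand $T\subseteq\hat T\cap\mathrm{Fix}(f)$ is connected and contains $p$, so $T\subseteq F$; on the other hand $F\subseteq\mathrm{Fix}(f)$ is connected and meets $Y$ at $p$, so $F\subseteq Y$, and $F$ is a flat torus in $Y$. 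Maximality of $T$ in $Y$ together with $T\subseteq F$ forces $F=T$. Hence the maximal torus $T$ of $Y$ is a Clifford torus in $E$, and, being totally geodesic in $Y$, it provides the required totally geodesic Clifford torus through any prescribed geodesic of $Y$. This shows that $Y$ is of Clifford type.
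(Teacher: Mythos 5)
Your proof is correct, and its overall architecture matches the paper's: reduce to showing that each maximal torus $T$ of $Y$ is a Clifford torus, produce an $f$-invariant maximal torus of $X$ containing $T$, then apply Corollary \ref{COR: max tori are Clifford} together with Lemma \ref{LEM: Fixed point sets of isometries Clifford tori}, identifying $T$ with the fixed-point component through $p$. Where you genuinely diverge is the Lie-theoretic step that yields the invariant torus. The paper takes an \emph{arbitrary} maximal abelian $\lieA\subset\lieP$ containing $\lieA_+\cong T_pT$ and proves it is automatically $\Ad(f)$-invariant: writing $\zeta\in\lieA$ as $\zeta_++\zeta_-$ with respect to $\lieP=\lieP_+\oplus\lieP_-$, the relation $[\lieP_\pm,\lieP_+]\subset\lieK_\pm$ splits $0=[\zeta_+,H]+[\zeta_-,H]$ (for $H\in\lieA_+$) into components lying in $\lieK_+$ and $\lieK_-$, which therefore vanish separately; maximality then gives $\lieA=\lieA_+\oplus(\lieA\cap\lieP_-)$. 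You instead \emph{construct} one invariant extension $\lieA=\lieA^+\oplus\lieA^-$, with $\lieA^-$ maximal abelian in the centralizer of $\lieA^+$ inside $\lieP^-$, and verify its maximality by symmetrizing a commuting vector into $w^\pm=\tfrac12(w\pm f_*w)$; this uses only that $f_*$ acts on $\lieP$ as an involutive automorphism, not the finer decomposition $\lieG=\lieK_+\oplus\lieK_-\oplus\lieP_+\oplus\lieP_-$ and its bracket relations. The trade-off: the paper's argument proves the stronger, reusable fact that \emph{every} maximal torus of $X$ containing $T$ is $f$-invariant, whereas yours is somewhat more elementary and also spells out the identification of $T$ with the fixed component of $f|_{\hat T}$, a point the paper leaves implicit. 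One detail you should make explicit: to identify $T_pX$ with $\lieP$ compatibly with brackets and to know that $f_*$ preserves this $\lieP$, you need $\Ad(f)$ and $\Ad(s_p)$ to commute, which follows from $s_p=s_{f(p)}=fs_pf^{-1}$, as the paper notes.
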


\begin{proof}
By Proposition \ref{PROP: Clifford type implies extrinsic symmetry}, $X$ is a compact extrinsically symmetric submanifold of $E$ and
$Y$ is a compact symmetric space being a totally geodesic closed submanifold of $X.$ It suffices to show that any maximal torus
$T_Y$ of $Y$ is a Clifford torus. We may enlarge $T_Y$ to a maximal torus $T_X$ of $X.$ Then $T_X$ is a Clifford torus in $E$ by
Corollary \ref{COR: max tori are Clifford}. Let $f$ be an involutive isometry of $X$ having $Y$ as a component of its fixed point set.
It now suffices to show that $T_X$ is $f$-invariant, since
then $T_Y$ is a connected component of the fixed point set of $f|_{T_X}$ and
thus a Clifford torus by Lemma \ref{LEM: Fixed point sets of isometries Clifford tori}.
Fixing $p\in T_Y$ it is actually  sufficient to see that the differential $f_*$ of $f$ at $p$ leaves the tangent space of $T_X$ at $p$ invariant.\par
Let $G$ be the (full) isometry group of $X$ and $\lieG$ its Lie algebra.
Every $g\in G$ induces an automorphism of $G$ given by conjugation with $g.$ Its differential $\Ad(g)$ at the identity
is an automorphism of $\lieG.$
The involution $\Ad(s_p),$ where $s_p$ is the geodesic symmetry of $X$ at $p,$
gives rise to a splitting $\lieG=\lieK\oplus\lieP$ where $\lieK$ and $\lieP$ the $(\pm 1)$-eigenspace of
$\Ad(s_p).$ The restriction to $\lieP$ of the differential $\mathrm{pr}_*$ of $\mathrm{pr}: G\to X,\; g\mapsto g.p:=g(p)$ at the identity  identifies
$\lieP$ with $T_pX.$ Under this identification $f_*:T_pX\to T_pX$ corresponds to $\Ad(f)|_{\lieP}.$\par
Let $\lieG_{\pm}$ be the  $(\pm 1)$-eigenspace of the involution $\Ad(f).$ Then $\lieG=\lieG_+\oplus\lieG_-.$
As $s_p = s_{f(p)} = fs_pf^{-1},$ the involutions $\Ad(f)$ and $\Ad(s_p)$ commute and we get a finer decomposition
$\lieG=\lieK_+\oplus\lieK_-\oplus\lieP_+\oplus\lieP_-$ where $\lieK_{\pm}=\lieK\cap\lieG_{\pm}$ and $\lieP_{\pm}=\lieP\cap \lieG_{\pm}.$
Note that
$\lieP_+$ is identified with $T_pY.$ Let $\lieA_+\subset\lieP_+$ and $\lieA\subset\lieP$ be the maximal
abelian subspaces corresponding to the tangent spaces of $T_Y$ and $T_X$ at $p.$ Obviously
$\lieA_+\subset\lieA.$ To show that $\lieA=\lieA_+\oplus (\lieA\cap\lieP_-)$
let $\zeta\in\lieA$ and $\zeta=\zeta_++\zeta_-$ with $\zeta_{\pm}\in\lieP_{\pm}.$
Then $0=[\zeta,H]=[\zeta_+,H]+[\zeta_-,H]$ for any $H\in\lieA_+.$
From $[\lieP_{\pm},\lieP_+]\subset\lieK_{\pm}$ we obtain $[\zeta_\pm,\lieA_+]=\{0\}$. By maximality we get
$\zeta_+\in\lieA_+$  and hence $\zeta_-\in\lieA\cap\lieP_-$.
Finally, since both $\lieA_+$ and $\lieA\cap\lieP_-$ are $\Ad(f)$-invariant, $\lieA$ is $\Ad(f)$-invariant and the tangent space of $T_X$ at $p$
is $f_*$-invariant.
\end{proof}

Next we show the converse of Proposition
\ref{PROP: Clifford type implies extrinsic symmetry}.

\begin{theorem} \label{THM: Maximal tori of ess are Clifford}
Every compact extrinsically symmetric submanifold $X$ of a Euclidean space $E$ is of Clifford type. More precisely,
every maximal torus of $X$ is a Clifford torus in $E.$
\end{theorem}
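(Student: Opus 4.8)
The plan is to realise a maximal torus $T$ of $X$ as the orbit of an abelian group of \emph{Euclidean} isometries and then to recognise that orbit as a standard Clifford torus. Throughout we may replace $E$ by the affine hull $\aff(X)$, so that any isometry of $E$ fixing $X$ pointwise is the identity; consequently the restriction homomorphism from the group generated by the reflections $r_p$ into the isometry group of $X$ is injective. Since each $r_p$ restricts to the geodesic symmetry $s_p$ of $X$ at $p$ (it fixes $p$ and acts as $-\Id$ on $T_pX$), this faithfully realises the transvections of $X$ by isometries of $E$.

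First I would fix $p\in T$ and set $\lieA=T_pT\subset\lieP$. For $v\in\lieA$ the transvections $r_{\gamma(s/2)}\circ r_{\gamma(0)}$ along the geodesic $\gamma(s)=\Exp_p(sv)$ form a one-parameter group of isometries of $E$ whose generator $\xi_v\in\lieIso(E)$ satisfies $\xi_v(p)=v$. The assignment $v\mapsto\xi_v$ is linear, and since $\lieA$ is abelian and the realisation in $\mathrm{Isom}(E)$ is faithful, the $\xi_v$ mutually commute; thus $\lieA^E=\{\xi_v:v\in\lieA\}$ is an abelian subalgebra of $\lieIso(E)$ and $T=\exp(\lieA^E)\cdot p$ is the orbit of the connected abelian group $A^E=\exp(\lieA^E)$. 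As $T$ is compact, its circumcenter $q$ is preserved by every isometry leaving $T$ invariant, hence fixed by $A^E$; translating $q$ to the origin puts $A^E$ into $\LieSO(E)$, so its closure is a torus $\mathbb{T}\subset\LieSO(E)$ with $\mathbb{T}\cdot p=T$. Simultaneously block–diagonalising $\mathbb{T}$ decomposes $E$ orthogonally into a fixed part $E_0$ and invariant planes $E_1,\dots,E_m$ on which $\mathbb{T}$ rotates with weights $\lambda_1,\dots,\lambda_m\in\lieA^*$; writing $p-q=\sum_i p_i$ with $p_i\in E_i$ the orbit becomes a product of circular motions, and one computes the (parallel) second fundamental form along $\lieA$ as $\alpha(v,w)=-\sum_i\lambda_i(v)\lambda_i(w)\,p_i$.

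The remaining and decisive step is to recognise $T$ as a \emph{standard} Clifford torus, that is, an orthogonal product of round circles, and not a skew subtorus of $\prod_i E_i$ (such as a diagonal $\{(z,z^2)\}$, which is abelian–homogeneous yet not Clifford). For this I would pass to the unit lattice $\Gamma=\{v\in\lieA:\Exp_p(v)=p\}$, extract an orthogonal basis $b_1,\dots,b_r$ by Lemma \ref{LEM: rect basis unit lattice unique}, and show that each generating geodesic $\Exp_p(\R b_j)$ is a round planar circle lying in a single plane, that these planes are mutually orthogonal, and that $T$ is their product. I expect the alignment of the weight planes $E_i$ with the lattice axes to be the main obstacle: a generating circle $\Exp_p(\R b_j)$ is round precisely when it carries a single rotational frequency, and it is exactly here that extrinsic symmetry must be used rather than mere homogeneity. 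The relation $r_p\,\xi_v\,r_p^{-1}=-\xi_v$ (conjugating a transvection by the symmetry at its base reverses it) forces $r_p$ to reflect each weight plane, and combined with the parallelism of $\alpha$ this should pin down the generating circles and their orthogonality; Lemma \ref{LEM: Reflections on generating circles} then confirms the required invariance.

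Once every maximal torus is known to be a Clifford torus, the theorem follows at once: an arbitrary geodesic of $X$ lies in a maximal torus $\Exp_p(\lieA)$ obtained by enlarging its direction to a maximal abelian $\lieA\subset\lieP$, and this torus is a totally geodesic Clifford torus in $E$. Hence $X$ is of Clifford type.
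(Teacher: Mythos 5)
Your first two steps are essentially the paper's own: realizing $T$ as the orbit of an abelian group of Euclidean isometries and simultaneously block-diagonalizing it is exactly how the paper obtains $V=V_0\oplus V_1\oplus\dots\oplus V_k$ and concludes that the maximal torus $T_X=\exp(\lieA).p$ is \emph{contained} in a Clifford torus $C$, with every geodesic of $T_X$ also a geodesic of $C$. You also correctly identify the decisive difficulty: nothing so far prevents $T_X$ from being a proper ``skew'' subtorus of $C$ (your example $\{(z,z^2)\}$), and mere homogeneity cannot exclude this.

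The gap is that at precisely this decisive point your argument stops being a proof and becomes a hope (``this should pin down the generating circles''). The tools you name are too weak. The relation $r_p\,\xi_v\,r_p^{-1}=-\xi_v$ only says that $r_p$ interchanges the weight-$\mu$ and weight-$(-\mu)$ spaces, hence acts anti-holomorphically (as a real-form reflection) on each weight plane; combining this with the fact that $r_p$ fixes $N_pX$ pointwise does yield something nontrivial (e.g.\ every $w\in T_pC$ satisfies $r_pw=-w$, so its normal component vanishes and $T_pC\subset T_pX$), but it does not force $T_pC$ to be abelian or equal to $\lieA$, so the skew-subtorus possibility survives; note that $C\not\subset X$ a priori, so you cannot even compare second fundamental forms off $T_pT$. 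What is missing is a mechanism that propagates tangency to $T_X$ across frequencies, and this is the actual content of the paper's proof: Lemma \ref{LEM: Hilfslemma} shows that for a geodesic $\gamma$ of $T_X$ through $p$ the vector $\gamma'''(0)=-S_{\alpha(\gamma',\gamma')}\gamma'(0)$ is again tangent to $T_X$, and its proof requires not just invariance of $\alpha$ (which you invoke) but also polarity of the isotropy representation, $[\lieK,v]=\lieA^\perp$ for regular $v\in\lieA$. Granting this, the paper takes $C$ of \emph{minimal} dimension (so that a dense geodesic $\gamma_1$ of $T_X$ has frequencies with $0<|a_1|<\dots<|a_k|$), iterates $\gamma_{\ell+1}'(0):=-\gamma_\ell'''(0)$ to produce tangent vectors of $T_X$ with frequencies $a_j^{3^\ell}$, and then kills any $w\in T_pC$ orthogonal to $T_pT_X$ by a dominance argument: the term with the largest $|a_j|$ and $w_j\neq 0$ would outgrow all others as $\ell\to\infty$. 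Nothing in your sketch plays the role of this third-derivative lemma and iteration; without them (or an equivalent), the alignment of the weight planes with the lattice axes --- the very obstacle you single out --- remains unproven.
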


To prepare the proof of Theorem \ref{THM: Maximal tori of ess are Clifford}
let $X\subset E$ be a compact, connected, extrinsically symmetric space. We may assume that the ambient Euclidean space
is a (real) Euclidean vector space
by choosing the barycenter of $X$ as origin. We therefore will denote the ambient space by $V$ instead of $E.$
Then the identity component $\hat{G}$ of the group generated by the reflections along the affine normal spaces of $X$
is a subgroup $\hat{G}\subset\LieSO(V).$
The restriction map $g\mapsto g|_X$ identifies  $\hat{G}$ with the identity component of the isometry group of $X.$
Let $p\in X.$ Since the reflection $r_p$ along the affine normal space of $X$ at $p$
restricts to the geodesic symmetry $s_p$ of $X$ at $p,$ conjugation with $r_p$ gives rise to an involutive automorphism of $\hat{G}$
whose differential at the identity coincides with $\Ad(s_p)$ on the Lie algebra of $\lieG$ of the isometry group of $X.$
Let $T_X$ be a maximal torus of $X$ that contains $p$
and let $\lieA$ be
the maximal abelian subspace of $\mathrm{Fix}(-\Ad(s_p))=:\lieP\cong T_pX$ which corresponds to the tangent space of $T_X$ at $p.$ Then
$T_X=\exp(\lieA).p$.\par

We next show an auxiliary lemma needed in the proof of Theorem \ref{THM: Maximal tori of ess are Clifford}.
Let $\gamma$ be a geodesic in $X.$ Then $\gamma''$ is a vector field along $\gamma$ normal to $X,$ more precisely $\gamma''=\alpha(\gamma',\gamma'),$ where $\alpha$ denotes the second fundamental form of $X.$
Since $X$ is extrinsically symmetric,  $\alpha$  is parallel (as $D\alpha$ is equivariant with respect to the differentials of reflections along the affine normal spaces).
Since $\gamma'$ is parallel we get
$$\gamma'''=-S_{\alpha(\gamma',\gamma')}\gamma',$$
where $S$ is the shape operator of $X.$ In particular, $\gamma'''$ is a vector field along $\gamma$ which is tangent to $X.$ 

\begin{lemma}\label{LEM: Hilfslemma}
 If $\gamma$ is a geodesic in $T_X$ with $\gamma(0)=p,$ then
 $\gamma'''(0)$ is tangent to $T_X.$
\end{lemma}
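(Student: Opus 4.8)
The plan is to pass to the linear model coming from $\hat G\subset\LieSO(V)$ and read off $\gamma'''(0)$ as a matrix expression, then localise it inside $\lieA$ via the root--space decomposition. Since $X$ is an orbit of $\hat G\subset\LieSO(V)$ it lies in the sphere of radius $|p|$ about the barycentre, so the position vector $p$ is normal to $X$ and the reflection $r_p$ is the \emph{linear} involution which is $+\Id$ on $N_pX$ and $-\Id$ on $T_pX$. Hence, in the splitting $V=N_pX\oplus T_pX$, the space $\lieK$ consists of the block--diagonal and $\lieP$ of the block--off--diagonal elements of $\lieG$, so every $\xi\in\lieP$ interchanges $T_pX$ and $N_pX$. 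Writing $\gamma(t)=\exp(t\xi)p$ with $\xi\in\lieA$ and differentiating the linear action gives $\gamma^{(k)}(0)=\xi^{k}p$; in particular $\gamma'''(0)=\xi^{3}p$, which is tangent to $X$ because $\xi^{3}$ is again block--off--diagonal. What remains is to upgrade ``tangent to $X$'' to ``tangent to $T_X$'', i.e.\ to prove $\xi^{3}p\in\lieA p=T_pT_X$.

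First I would turn this into a statement about the second fundamental form. Under the isometry $\lieP\cong T_pX,\ \eta\mapsto\eta p$, the decomposition $\lieP=\lieA\oplus\bigoplus_\lambda\lieP_\lambda$ into restricted root spaces is orthogonal, so $\gamma'''(0)\in\lieA p$ is equivalent to $\langle\xi^{3}p,Zp\rangle=0$ for every root vector $Z$. Transvection invariance gives the clean identity $\alpha(\xi p,W)=\xi W$ for all $W\in T_pX$: the parallel transport of $W$ along $\gamma$ is $t\mapsto\exp(t\xi)W$, whose ambient derivative at $0$ is $\xi W\in N_pX$. Consequently $\langle\xi^{3}p,Zp\rangle=-\langle S_{\alpha(\xi,\xi)}\xi p,Zp\rangle=-\langle\alpha(\xi,\xi),\alpha(\xi,Z)\rangle$, so the lemma is equivalent to the orthogonality $\alpha(\xi,\xi)\perp\alpha(\xi,Z)$ in $N_pX$ for all $\xi\in\lieA$ and all root vectors $Z$.

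Next I would diagonalise the commuting family $\lieA\subset\lieSO(V)$, splitting $V=V_0\oplus\bigoplus_\mu V_\mu$ with $\xi|_{V_\mu}=\mu(\xi)J_\mu$ for a complex structure $J_\mu$ on $V_\mu$. Writing $p=\sum_\mu p_\mu$ and $v_\mu:=J_\mu p_\mu$, the facts that all $\xi^{2k}p$ are normal and all $\xi^{2k+1}p$ tangent force (by a Vandermonde argument in the distinct values $\mu(\xi)^2$) that $p_\mu\in N_pX$ and hence $v_\mu=\mu(\xi)^{-1}\xi p_\mu\in\xi N_pX\subseteq T_pX$. One then computes $\gamma'''(0)=\xi^{3}p=-\sum_\mu\mu(\xi)^{3}v_\mu$, while $\lieA p=\{\sum_\mu\mu(H)v_\mu:H\in\lieA\}$. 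Thus $\xi^{3}p\in\lieA p$ for every $\xi\in\lieA$ is precisely the assertion that the cubes $\mu\mapsto\mu(\xi)^{3}$ respect every linear relation among the active weights $\mu$ (those with $p_\mu\neq0$); since $(a+b)^{3}\neq a^{3}+b^{3}$, this holds exactly when the active weights are linearly independent.

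Establishing this linear independence is the crux, and it is where extrinsic symmetry must enter in an essential way, since the flat torus structure by itself does not force it. The plan is to derive it from the parallelism of $\alpha$ together with the extrinsic reflections $r_q=\exp(H)\,r_p\exp(-H)$ at the points $q=\exp(H)p\in T_X$: these preserve $T_X$, act on it as the torus geodesic symmetries, and reverse $\gamma$ about $q$, giving $r_{\gamma(s)}\gamma'''(t)=-\gamma'''(2s-t)$; exploiting this family together with $\nabla\alpha=0$ should pin the cubic combination $\sum_\mu\mu(\xi)^{3}v_\mu$ back into $\lieA p$ and exclude any relation among the active weights. I expect this independence---a ``strong orthogonality'' of the generating frequencies of $T_X$, very much in the spirit of the paper's theme---to be the main obstacle; once it is available, $\gamma'''(0)=-\sum_\mu\mu(\xi)^{3}v_\mu$ lies in $\lieA p=\spann\{v_\mu\}$ and the lemma follows.
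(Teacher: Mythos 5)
Your first two paragraphs are sound and in fact land exactly on the reformulation that the paper itself proves: since $\gamma'''(0)=-S_{\alpha(\xi,\xi)}\xi$, the lemma amounts to $\langle \alpha(\xi,\xi),\alpha(\xi,Z)\rangle=0$ for every $\xi\in\lieA$ and every $Z$ in the orthogonal complement $\lieA^\perp$ of $\lieA$ in $\lieP$. The gap is that you never prove this. Instead you translate it once more, via the weight decomposition of $V$ under $\lieA$, into linear independence of the active weights (even this translation is slightly off: the cube is an odd function, so relations $\mu'=-\mu$ are harmless and the correct condition is independence \emph{up to sign}), and you then state explicitly that establishing this is ``the crux'' and ``the main obstacle'', offering only the expectation that parallelism of $\alpha$ and the reflections $r_q$, $q\in T_X$, ``should'' yield it. This reformulation is not a simplification but an escalation: independence of the active weights up to sign is precisely the statement that $T_X$ is a Clifford torus, i.e.\ the full conclusion of Theorem \ref{THM: Maximal tori of ess are Clifford}, for whose proof this lemma is only one ingredient (the paper still needs the iteration $\gamma_{\ell+1}'(0)=-\gamma_\ell'''(0)$ and the domination argument afterwards). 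So the proposal replaces the lemma by something at least as strong and leaves it unproven.

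Moreover, the specific plan you sketch for the crux cannot succeed. Every reflection $r_q$ with $q=\exp(H)p\in T_X$ preserves the joint eigenspace decomposition of $V$ under $\{\xi^2:\xi\in\lieA\}$: indeed $\Ad(r_p)=-\mathrm{id}$ on $\lieP\supset\lieA$, so $r_p$ anticommutes with every $\xi\in\lieA$ and hence commutes with $\xi^2$, while $\exp(H)$ preserves each weight plane; thus the whole group generated by $\{r_q: q\in T_X\}$ acts block-diagonally with respect to the (unsigned) weight decomposition and is structurally blind to linear relations among weights in different blocks. Also, the identity $dr_{\gamma(s)}\gamma'''(t)=-\gamma'''(2s-t)$ is automatic (differentiate $r_{\gamma(s)}(\gamma(t))=\gamma(2s-t)$ three times) and carries no new information, and $\nabla\alpha=0$ was already spent in deriving $\gamma'''=-S_{\alpha(\gamma',\gamma')}\gamma'$. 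The missing idea is exactly the one the paper uses: elements of the isotropy algebra $\lieK$, which do \emph{not} preserve $\lieA$ or the weight planes. The paper differentiates the invariance $\|\alpha(k_t.v,k_t.v)\|^2=\mathrm{const}$ for $k_t=\exp(t\eta)$, $\eta\in\lieK$, obtaining $\langle\alpha(v,v),\alpha(v,[\eta,v])\rangle=0$, and then invokes polarity of the isotropy representation, $[\lieK,v]=\lieA^\perp$ for regular $v\in\lieA$, which is exactly the orthogonality $\langle\alpha(v,v),\alpha(v,\lieA^\perp)\rangle=0$ that your second paragraph identified as the goal. Without bringing $\lieK$ (or some equivalent family of isometries moving $\lieA$ out of itself) into play, the argument cannot close.
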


\begin{proof}
Let $\eta\in\lieK$ and $k_t=\exp(t\eta),$ $t\in\R,$ the corresponding one-parameter subgroup of $K.$ Let $v\in\lieA.$ Since $\alpha$ is invariant under extrinsic isometries of $X$ the map
$t\mapsto \|\alpha(k_t.v,k_t.v)\|^2$ is constant.
Differentiation at $t=0$ yields
 $0= \langle \alpha(v,v),\alpha(v,[\eta, v])\rangle.$
 If $v$ is a regular vector in $\lieA\cong T_pT_X,$ then $[\lieK, v]=\lieA^\perp,$ where $\lieA^\perp$ denotes the orthogonal
 complement of $\lieA$ in $\lieP$ (polarity of the linear isotropy representation). Therefore
$\langle S_{\alpha(v,v)}v, \lieA^\perp\rangle=\langle \alpha(v,v),\alpha(v,\lieA^\perp)\rangle=0$
 for all regular $v\in\lieA$ and,  by continuity, for all $v\in\lieA.$ This shows $\gamma_v'''(0)=-S_{\alpha(v,v)}v\in\lieA$ for all $v\in\lieA.$
\end{proof}

\begin{proof}[\textsc{Proof of Theorem \ref{THM: Maximal tori of ess are Clifford}}]
Since all $H\in\lieA$ are simultaneously diagonalizable over $\C$,
they can be brought  simultaneously into block diagonal form over $\R.$
More precisely, $V = V_0 \oplus V_1 \oplus \dots \oplus V_k$ such that
each $H\in \lieA$ acts trivially on $V_0$, and on each $V_j \cong \C$ ($j=1,\dots,k$)  it acts by the factor
$i\alpha_j(H)$ for some real-valued linear form $\alpha_j$ on $\lieA$.
This shows that the maximal torus $T_X = \{\exp(H).p: H\in\lieA\}$ is contained in a
Clifford torus $C$ in $V$, and each geodesic $\gamma$ of $T_X$ emanating from $p$,
that is $\gamma(t) =\exp (tH).p$, $H\in\lieA$, is a geodesic of $C$ too.
We may assume that $C$ is a Clifford torus of minimal dimension
with these properties and further that
$C = \{(z_1,\dots,z_k): |z_j| = r_j\}$ is a standard torus with $r_j>0$
and
$p = (r_1,\dots,r_k).$ Let $\gamma$ be a dense
geodesic in $T_X$ with $\gamma(0) = p$. Since $\gamma$ is also a geodesic
in $C$, it has the form $\gamma(t) =
\left(r_1e^{ia_1t},\dots,r_ke^{ia_kt}\right)$ for some $a_j\in\R$.
As $\gamma$ is dense in $T_X$ and $C$ is of minimal dimension,
the numbers $(a_j)^2$ are pairwise different and do not vanish. In fact,
$\left\{\left(re^{iat},r'e^{iat}\right): t\in \R\right\}$ and $\left\{\left(re^{iat},r'e^{-iat}\right): t\in \R\right\}$
are planar circles for any $r,r'>0$ which would allow to reduce
the dimension of $C$ if $a_{j_1}^2 = a_{j_2}^2$ for some $j_1\neq j_2.$ Similarly, if $a_j=0$, the
corresponding $\Sphere^1$-factor of $C$ could be deleted. Thus we
may assume $0 < |a_1| <\cdots < |a_k|$.\par

By construction we have $T_X\subset C$. To show equality (and thus that
$T_X$ is a Clifford torus) let $\gamma_\ell$ be the geodesic of $T_X$ emanating from $p$ which is
defined inductively by $\gamma_1 = \gamma$ and $\gamma_{\ell+1}$
with initial vector $\gamma_{\ell+1}'(0) := -\gamma_\ell'''(0)\in T_pT_X$
(see Lemma \ref{LEM: Hilfslemma}).
From $\gamma_1(t) = \left(r_1e^{ia_1t},\dots,r_ke^{ia_kt}\right)$ we get
$\gamma_\ell(t) = \Big(r_1e^{ita_1^{(3^\ell)}},\dots,r_ke^{ita_k^{(3^\ell)}}\Big)$.
Now, if $w = i(w_1.\dots,w_k) \in i\R^k = T_pC$ is perpendicular to $T_pT_X\subset T_pC$,
we get $\langle \gamma'_{\ell}(0),w\rangle = - \sum\limits_{j=1}^k r_ja_j^{(3^\ell)}w_j = 0$ for all $\ell\in\N$ and therefore $w_k = 0$
since otherwise the absolute value of the last summand would dominate
the rest for sufficiently large $\ell$. Similarly, $w_{k-1},\dots,w_1 = 0$.
This proves $T_pT_X = T_pC$ implying $T_X = C$.
\end{proof}

\begin{corollary}[cf.\ \cite{Take-65, LoosCubic, EQT}, see Remark \ref{REM: HC}]
Maximal tori of compact extrinsically symmetric spaces have rectangular unit lattices.
\end{corollary}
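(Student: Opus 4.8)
The plan is to combine Theorem \ref{THM: Maximal tori of ess are Clifford} with the structural observations about Clifford tori collected in Section \ref{SECT: Clifford tori}. First I would recall that the unit lattice $\Gamma=\{v\in T_pT_X:\;\Exp_p(v)=p\}$ at a point $p$ of a maximal torus $T_X$ is defined purely in terms of the intrinsic Riemannian exponential map, so that its isometry type as a lattice in the Euclidean space $T_pT_X$ depends only on the inner geometry of $T_X$ and not on any embedding. Since $T_X$ is totally geodesic in $X,$ it carries the same intrinsic flat metric whether regarded inside $X$ or as a Clifford torus in the ambient space.

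Next I would invoke Theorem \ref{THM: Maximal tori of ess are Clifford}: after placing the barycenter of $X$ at the origin of the ambient Euclidean vector space $V,$ every maximal torus $T_X$ of the compact extrinsically symmetric space $X\subset V$ is a Clifford torus in $V.$ Fixing a parametrization $\varphi:\C^r\to V$ realizing $T_X=\varphi(C')$ as a Clifford torus, the discussion preceding Lemma \ref{LEM: rect basis unit lattice unique} shows that $T_X$ is isometric to the product of its generating circles $C_k(p),\; k=1,\dots,r,$ which lie in pairwise orthogonal affine planes. Consequently $\Gamma$ splits as the orthogonal direct sum of the one-dimensional unit lattices $\Z b_k$ of these circles, with the $b_k$ pairwise orthogonal. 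Hence $\{b_1,\dots,b_r\}$ is an orthogonal basis of $\Gamma,$ so the unit lattice of $T_X$ is rectangular.

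There is essentially no obstacle to overcome here: the corollary is an immediate consequence of the theorem together with the product structure of Clifford tori, and the uniqueness of the rectangular basis (up to sign and order) is guaranteed by Lemma \ref{LEM: rect basis unit lattice unique}. The only point requiring a word of care is the one noted above, namely that the rectangularity established for the Clifford torus $\varphi(C')$ transfers verbatim to the maximal torus $T_X,$ because total geodesy makes the two intrinsic metrics coincide and the unit lattice is an intrinsic invariant.
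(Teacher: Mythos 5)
Your proposal is correct and follows exactly the route the paper intends: the corollary is stated without a separate proof precisely because it is immediate from Theorem \ref{THM: Maximal tori of ess are Clifford} combined with the observation in Section \ref{SECT: Clifford tori} that the unit lattice of a Clifford torus is the orthogonal direct sum of the unit lattices of its generating circles. Your added remark about the unit lattice being an intrinsic invariant is harmless but not really needed, since the theorem asserts that the maximal torus itself \emph{is} a Clifford torus, so no transfer between metrics is required.
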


Combining Proposition \ref{PROP: Clifford type implies extrinsic symmetry} with Theorem \ref{THM: Maximal tori of ess are Clifford}
and observing that any flat torus contains a dense geodesic we get:

\begin{corollary}\label{COR: Summary}
Let $X$ be a connected submanifold of a Euclidean space $E.$ Then the following conditions are equivalent:
\begin{enumerate}[(i)]
 \item $X$ is a compact extrinsically symmetric space.
 \item $X$ is of Clifford type.
\item $X$ is intrinsically a compact symmetric space and every maximal torus of $X$ a Clifford torus in $E.$
\item $X$ is intrinsically a compact symmetric space and every maximal torus of $X$ is an extrinsically symmetric space in $E.$
\end{enumerate}
\end{corollary}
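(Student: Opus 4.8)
The plan is to establish the four equivalences as a single cyclic chain of implications $(ii)\Rightarrow(i)\Rightarrow(iii)\Rightarrow(iv)\Rightarrow(ii)$, so that each condition follows from its predecessor. The first two arrows are precisely the content of the two main results already at hand: $(ii)\Rightarrow(i)$ is Proposition \ref{PROP: Clifford type implies extrinsic symmetry}, and the part of $(i)\Rightarrow(iii)$ concerning maximal tori is Theorem \ref{THM: Maximal tori of ess are Clifford}. Hence the genuinely new work lies only in the arrows $(iii)\Rightarrow(iv)$ and $(iv)\Rightarrow(ii)$, together with the intrinsic-symmetry half of $(i)\Rightarrow(iii)$.

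For $(i)\Rightarrow(iii)$ I would first note that a compact extrinsically symmetric space is intrinsically a compact symmetric space: at each $p\in X$ the reflection $r_p$ along the affine normal space restricts to an isometry of $X$ fixing $p$ whose differential at $p$ is $-\Id$ on $T_pX$, that is, the geodesic symmetry $s_p$; the existence of such symmetries together with the given compactness makes $X$ a compact symmetric space. That every maximal torus is a Clifford torus is then exactly Theorem \ref{THM: Maximal tori of ess are Clifford}. The arrow $(iii)\Rightarrow(iv)$ is immediate from the discussion preceding Lemma \ref{LEM: Reflections on generating circles}, where it is observed that every Clifford torus is extrinsically symmetric in $E$; thus if every maximal torus is a Clifford torus, then every maximal torus is extrinsically symmetric.

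The substantive step is $(iv)\Rightarrow(ii)$. Given a geodesic $\gamma$ of $X$, I would use that $X$ is intrinsically a compact symmetric space to enclose $\gamma$ in a maximal torus $T_X$ of $X$ (the closure of $\gamma$ is a subtorus, which lies in some maximal torus), this $T_X$ being a totally geodesic submanifold of $X$. By hypothesis $T_X$ is extrinsically symmetric in $E$; regarding it as a compact extrinsically symmetric submanifold in its own right and applying Theorem \ref{THM: Maximal tori of ess are Clifford} to $T_X$, its unique maximal torus -- namely $T_X$ itself -- is a Clifford torus in $E$. Therefore $\gamma$ lies in the totally geodesic Clifford torus $T_X\subset X$, and $X$ is of Clifford type. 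I expect the main obstacle to be precisely this passage from an \emph{extrinsically symmetric torus} to a \emph{Clifford torus}: it is here that Theorem \ref{THM: Maximal tori of ess are Clifford} is invoked, and its applicability rests on the observation that a flat torus, being abelian, is its own maximal torus and carries a dense geodesic, so that the dense-geodesic argument in the proof of that theorem applies verbatim to $T_X$.
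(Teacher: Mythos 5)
Your proposal is correct and takes essentially the same route as the paper: the paper's one-line proof simply combines Proposition \ref{PROP: Clifford type implies extrinsic symmetry}, Theorem \ref{THM: Maximal tori of ess are Clifford}, and the observation that any flat torus contains a dense geodesic (hence is its own maximal torus), which is exactly the cyclic chain $(ii)\Rightarrow(i)\Rightarrow(iii)\Rightarrow(iv)\Rightarrow(ii)$ you spelled out. In particular, your handling of $(iv)\Rightarrow(ii)$ --- applying Theorem \ref{THM: Maximal tori of ess are Clifford} to the extrinsically symmetric flat torus $T_X$ regarded as a submanifold in its own right --- is precisely where the paper's dense-geodesic observation enters.
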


\begin{remark}
Corollary \ref{COR: Summary} is closely related to the results of \textsc{Ferus} and \textsc{Schirr\-macher} in
\cite{Fe-Sch-82}. However, it seems that they take the implication `(i)$\implies$(iv)' for granted.
\end{remark}

\section{Harish-Chandra's theorem on strongly orthogonal roots}
\label{SECT: Harish Chandra}

We give a new proof of \textsc{Harish-Chandra}'s result on the existence of strongly orthogonal roots by means of
Theorem \ref{THM: Maximal tori of ess are Clifford} above.\par

Let $G$ be a compact, connected, semisimple Lie group with a bi-invariant metric, $\lieG$ its Lie algebra and $\xi\in\lieG$
an non-zero element with $\ad(\xi)^3=-\ad(\xi).$ Then
$X:=\Ad(G)\xi$ is a hermitian symmetric space of compact type
and $X\subset \lieG$ is extrinsically symmetric.
In fact, the 1-parameter group $\varphi_t=e^{t\ad(\xi)}=\Ad(\exp(t\xi))$ of isometries of $\lieG$ leaves $X$ invariant and $\sigma:=\varphi_{\pi}$ is the reflection along the affine normal space (in fact a linear subspace) of $X$ at $\xi.$ Indeed,
$\lieK:=\lieZ(\xi)=\mathrm{ker}(\ad(\xi))$ and $\lieP=[\lieG,\xi]$ are the normal and the tangent spaces of $X$ at $\xi,$
and $\varphi_t=\mathrm{id}$ on $\lieK$ and $\varphi_t=\cos(t)\cdot \mathrm{id}+\sin(t)\cdot \ad(\xi)$ on $\lieP$ due to $\ad(\xi)^3=-\ad(\xi).$ Thus $\sigma$ is an involution with fixed point set $\lieK$ and $(-1)$-eigenspace $\lieP.$ Hence $X\cong G/G_{\xi}$
with $G_{\xi}=\{g\in G:\; \Ad(g)\xi=\xi\}$ is a symmetric space of compact type which moreover is hermitian symmetric as $J:=\varphi_{\frac{\pi}{2}}|_{\lieP}$ defines an $\Ad(G_\xi)$-invariant almost complex structure on $\lieP.$
Conversely, every hermitian symmetric space of compact type is obtained
in this way (cf.\ \cite[Ch.\ VIII, Thm.\ 4.5(i)]{Helg-78} or cf.\ \cite[proof of Thm.\ 4.9, pp.\ 240--243]{Ise-Takeuchi}).\par

Let $\lieH$ be a Cartan (that is maximal abelian) subalgebra of $\lieG$ containing $\xi$ and $\lieG^\C=\lieH^\C\oplus
\sum\limits_{\alpha\in\Root}\lieG_\alpha$ the corresponding root space decomposition of the complexification of
$\lieG$ where an element $\alpha\in\lieH^*\setminus\{0\}$ lies in $\Root$ if and only if
$\lieG_\alpha:=\{X\in\lieG^{\C}: \ad(H)X=i\alpha(H)X \; \text{for all $H\in\lieH$}\}\neq \{0\}.$
Note that $\ad(\xi)^3=-\ad(\xi)$ implies $\alpha^3(\xi)=\alpha(\xi)$ for all $\alpha\in\Root.$
The roots $\alpha\in\R$ with $\alpha(\xi)\neq 0$ are called \emph{of noncompact type}
(this name has its origin in the dual case $\lieG^*=\lieK\oplus i\lieP$) and two roots $\alpha,\beta\in\R$
are called \emph{strongly orthogonal} if $\alpha\pm\beta\notin\Root\cup\{0\}.$

\begin{theorem}[\textsc{Harish-Chandra} {\cite[p.\ 582 f]{HC-56}}]
\label{THM: H-C}
There exists $r=\mathrm{rank}(X)$ pairwise strongly orthogonal roots of noncompact type.
\end{theorem}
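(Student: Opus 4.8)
The plan is to read off Harish-Chandra's roots from the rectangular Clifford structure of a maximal torus of $X$. Placing the barycenter of $X$ at the origin, $X=\Ad(G)\xi\subset\lieG$ is a compact extrinsically symmetric space with tangent space $\lieP=[\lieG,\xi]$, normal space $\lieK=\lieZ(\xi)=\ker(\ad\xi)$, and invariant complex structure $J=\ad(\xi)|_{\lieP}$. I fix a maximal torus $T_X\ni\xi$ and the corresponding maximal abelian $\lieA\subset\lieP$, so $\dim\lieA=r=\mathrm{rank}(X)$ and $T_X=\{e^{\ad H}\xi:H\in\lieA\}$. By Theorem \ref{THM: Maximal tori of ess are Clifford}, $T_X$ is a Clifford torus in $\lieG$; hence, by Lemma \ref{LEM: rect basis unit lattice unique}, there is an orthogonal basis $b_1,\dots,b_r$ of $\lieA$ whose associated closed geodesics --- the generating circles $C_k=\{e^{s\,\ad(b_k)}\xi\}$ of $T_X$ --- are planar round circles, and $\aff(T_X)$ splits orthogonally as the sum of the planes $Q_k$ carrying the $C_k$.

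To each generating circle I attach an $\lieSU(2)$-triple. Write $\xi=c_k+\xi_k$, where $c_k$ is the $\ad(b_k)$-fixed part of $\xi$ (the center of $C_k$) and $\xi_k:=\xi-c_k$, and put $w_k:=[\xi,b_k]=Jb_k$, the velocity of $C_k$ at $\xi$. Planarity of $C_k$ says $\ad(b_k)^2\xi_k=-\omega_k^2\,\xi_k$ for a single $\omega_k\neq0$. Since $\xi\in\lieK=\ker(\ad\xi)$ forces $c_k\in\lieK$, we get $[\xi,c_k]=0$ and, by the Jacobi identity, $[c_k,w_k]=[c_k,[\xi,b_k]]=0$; a direct check then shows that $\lieS_k:=\spann_{\R}\{b_k,w_k,\xi_k\}$ is a subalgebra with $[\xi_k,b_k]=w_k$, $[\xi_k,w_k]=-b_k$, $[b_k,w_k]=\omega_k^2\,\xi_k$, hence isomorphic to $\lieSU(2)$. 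Here $\xi_k\in\lieK$ is an imaginary coroot and $[\xi,\xi_k]=0$.

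The crucial use of the Clifford theorem is that the $Q_k$ do not merely sit orthogonally but split $\aff(T_X)$, so the linear map $e^{s\,\ad(b_j)}$ restricts to the identity on $Q_k$ for $j\neq k$; therefore $\ad(b_j)\xi_k=\ad(b_j)w_k=0$ whenever $j\neq k$. Combined with $[\lieA,\lieA]=0$ and the Jacobi identity, this makes all cross brackets between $\lieS_k$ and $\lieS_l$ vanish, so the triples commute pairwise. In $\lieG^\C$ set $E_k:=b_k-iw_k$; then $[\xi_k,E_k]=iE_k$, $[\xi_l,E_k]=0$ for $l\neq k$, and crucially $[\xi,E_k]=iE_k$. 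The commuting coroots $\xi_1,\dots,\xi_r$ span a maximal abelian subspace of $\lieS_1\oplus\cdots\oplus\lieS_r$, and adjoining a Cartan subalgebra of the centralizer of this sum produces a Cartan subalgebra $\tilde\lieH$ of $\lieG$ that contains $\xi$ (because $\xi-\sum_k\xi_k$ centralizes every $\lieS_k$). With respect to $\tilde\lieH$ each $E_k$ is a root vector for a root $\gamma_k$ with $\gamma_k(\xi)=1$, i.e.\ of noncompact type; and since the $\lieS_k$ commute we have $[\lieG_{\gamma_k},\lieG_{\pm\gamma_l}]=0$ for $k\neq l$, which is precisely strong orthogonality $\gamma_k\pm\gamma_l\notin\Root\cup\{0\}$. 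As $\tilde\lieH$ and the given $\lieH$ are Cartan subalgebras of $\lieK=\lieZ(\xi)$, they are conjugate by some $g\in G_\xi$; transporting the $\gamma_k$ by $\Ad(g)$, which fixes $\xi$, yields $r$ pairwise strongly orthogonal roots of noncompact type for $\lieH$.

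I expect the main obstacle to be the third paragraph: converting the metric Clifford data into the algebraic identities $\ad(b_j)|_{Q_k}=0$ for $j\neq k$, since it is this product (not merely orthogonal) structure of the torus that upgrades ``orthogonal $2$-planes'' to ``commuting $\lieSU(2)$-triples'' and hence to strong orthogonality; mere orthogonality of the planes would not suffice. A secondary bookkeeping point is verifying that $\tilde\lieH$ is genuinely a Cartan subalgebra containing $\xi$ and that the final $\Ad(G_\xi)$-conjugation preserves both the value $\gamma_k(\xi)$ and strong orthogonality.
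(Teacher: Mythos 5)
Your proposal has the same overall architecture as the paper's proof: from the Clifford--torus structure of $T_X$ you extract planar circular geodesics in pairwise orthogonal planes, turn each into a three-dimensional triple $\lieS_k=\spann\{b_k,w_k,\xi_k\}$, show the triples commute, build a Cartan subalgebra adapted to them, and read off the roots from $b_k\mp iw_k$; this is exactly the paper's $X_j,Y_j,H_j$ construction (your $\xi_k$ is the paper's $H_k$ up to sign and scale). Two comments on your commutation step. First, your key assertion is misstated: $e^{s\,\ad(b_j)}$ does \emph{not} restrict to the identity on the affine plane $Q_k$; it carries $Q_k$ onto the parallel plane $\aff\big(C_k(e^{s\,\ad(b_j)}\xi)\big)$, since the torus translation generated by $b_j$ moves the $j$-th coordinate of every point of $T_X$. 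What is true, and what you actually use, is that this map is a \emph{translation}, i.e.\ its linear part is the identity on the direction space of $Q_k$, which yields $[b_j,w_k]=[b_j,\xi_k]=0$; with that repair your product-structure argument is a correct and rather geometric alternative to the paper's. Second, your closing claim that ``mere orthogonality of the planes would not suffice'' is wrong: the paper derives the commutation precisely from orthogonality, using $[\lieA,\lieA]=0$ and the $\ad$-invariance of the inner product, via the chain $\|[X_j,Y_k]\|^2=\dots=\langle H_j,H_k\rangle=0$.

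The genuine gap is the Cartan subalgebra step, which you set aside as ``secondary bookkeeping''. The claim ``adjoining a Cartan subalgebra of the centralizer of $\lieS_1\oplus\dots\oplus\lieS_r$ produces a Cartan subalgebra of $\lieG$'' is false for general pairwise commuting $\lieSU(2)$-triples: for a principal $\lieSU(2)$ in $\lieSU(3)$ the centralizer is zero, and $\R\xi_1$ is not a Cartan subalgebra of $\lieSU(3)$. So this step needs an argument specific to the present situation, and it is exactly here that the paper's proof does its remaining work: choose a Cartan subalgebra $\lieH'$ of $\lieG$ containing $\lieA$; it splits as $\lieH'=\lieH_{\lieK}\oplus\lieA$ with $\lieH_{\lieK}=\lieH'\cap\lieK$, and the Jacobi identity together with $[\lieH_{\lieK},\lieA]=0$ and $[\lieH_{\lieK},\xi]=0$ shows that $\lieH_{\lieK}$ centralizes every $\lieS_k$. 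Hence the centralizer of $\lieS_1\oplus\dots\oplus\lieS_r$ contains the abelian subalgebra $\lieH_{\lieK}$ of dimension $\mathrm{rank}(\lieG)-r$, so your $\tilde\lieH$ has dimension at least $\mathrm{rank}(\lieG)$; being abelian in a compact Lie algebra, it is then indeed a Cartan subalgebra. (The paper packages this as: $\lieH_{\lieK}+\spann\{H_1,\dots,H_r\}$ is abelian of dimension $\dim\lieH'$, hence Cartan, hence contained in $\lieK$ and therefore containing $\xi$.) Once this is supplied, the rest of your argument --- the root vectors $E_k$, $\gamma_k(\xi)=1$, strong orthogonality from $[\lieG_{\gamma_k}, \lieG_{\pm\gamma_l}]=0$, and the final $\Ad(G_\xi)$-conjugation onto the originally fixed $\lieH$ --- is correct and agrees with the paper's.
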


\begin{proof}
Let $\lieA\subset\lieP$ be a maximal abelian subspace of $\lieP$ and $\lieH'$ a Cartan subalgebra of $\lieG$ containing $\lieA.$
Then $r=\mathrm{dim}(\lieA)$ and $\lieH'=\lieH_{\lieK}\oplus\lieA$ where $\lieH_{\lieK}=\lieH'\cap \lieK.$ Note however that $\lieH'$ does not contain $\xi.$\par
Since $X$ is extrinsically symmetric and thus of Clifford type by Theorem \ref{THM: Maximal tori of ess are Clifford}, there exists pairwise
orthogonal vectors $X_1,\dots, X_r\in\lieA$ such that the geodesics $\gamma_j(t)=\Ad(\exp(t X_j)\xi$, $j=1,\dots, r,$ in $X$ are
planar circles of smallest period $2\pi$ in $\lieG,$ contained in pairwise orthogonal planes. In particular, $\gamma'''_j(0)=-\gamma'_j(0).$
Let $Y_j=\gamma_j'(0)$ and $H_j=\gamma_j''(0).$ Then $Y_j=[X_j,\xi]\in\lieP$, $H_j=[X_j,Y_j]\in\lieK=\lieZ(\xi)$ and
$Y_1,\dots, Y_r,H_1,\dots, H_r$ are pairwise orthogonal. Thus for each $j$ we get a 3-dimensional subalgebra
$\lieG_j:=\mathrm{span}\{X_j,Y_j,H_j\}$ of $\lieG$ as $[X_j,H_j]=\gamma_j'''(0)=-\gamma_j'(0)=-Y_j$ and
$[Y_j,H_j]=[[X_j,\xi],H_j]=[[X_j,H_j],\xi]=-[Y_j,\xi]=-\ad(\xi)^2X_j=X_j.$
Moreover, these subalgebras pairwise commute as $\lieA$ (containing all $X_j$) and $[\lieA,\xi]=\varphi_{\frac{\pi}{2}}(\lieA)$ (containing all $Y_j$) are abelian and
$\|[X_j,Y_k]\|^2=\langle   [X_j,[X_k,\xi]],[X_j,Y_k]\rangle
=\langle   [X_k,[X_j,\xi]],[X_j,Y_k]\rangle
=-\langle Y_j, [X_k,[X_j,Y_k]]\rangle
=-\langle Y_j, [X_j,[X_k,Y_k]]\rangle
=-\langle Y_j, [X_j,H_k]\rangle
=\langle H_j,H_k\rangle=0$ for $j\neq k$ implying the remaining relations by the Jacobi identity.\par
Let $\lieH:=\lieH_\lieK+\mathrm{Span}\{H_1,\dots, H_r\}.$ Since $\lieH_{\lieK}$ commutes with $\lieA$ and $\xi,$
and $H_j=[X_j,[X_j,\xi]]$, $\lieH$ is abelian and $\lieH_{\lieK}$ is orthogonal to $H_1,\dots, H_k.$ In particular
$\mathrm{dim}(\lieH)=\mathrm{dim}(\lieH').$ Therefore $\lieH$ is a Cartan subalgebra of $\lieH$ contained in $\lieK$ which implies
$\xi\in\lieH.$ For $j\in\{1,\dots, r\}$ we define $\alpha_j:=\frac{1}{\|H_j\|^2}\langle H_j, \;\; \rangle \in\lieH^*.$
Then $\alpha_1,\dots, \alpha_r$ are pairwise strongly orthogonal roots of $\lieG$ of noncompact type as
$\alpha_j(\xi)=-\frac{\|Y_j\|^2}{ \|H_j\|^2}$, and $\lieG_{\alpha_j}$ is spanned by $X_j+iY_j,$ and
$\lieG_{\alpha_j\pm\alpha_k}=[\lieG_{\alpha_j},\lieG_{\pm\alpha_k}]=0$ for  $j\neq k.$
\end{proof}

\begin{remark}\label{REM: HC}
 Here we indicate how conversely  Theorem \ref{THM: Maximal tori of ess are Clifford}  may be deduced from \textsc{Harish-Chandra}'s Theorem \ref{THM: H-C} using results of \textsc{Ferus} and \textsc{Takeuchi}. Let $X=\Ad(G)\xi$ be the extrinsically symmetric submanifold of $\lieG$ as above. If
 $\alpha_1,\dots, \alpha_r$ are $r=\mathrm{rank}(X)$ pairwise strongly orthogonal noncompact roots with respect to a Cartan subalgebra of $\lieG$ containing $\xi$ and if $\lieG_{\alpha_j}=\C(X_j+iY_j),$ then
 the $\lieG_j:=\mathrm{span}\{X_j,Y_j, H_j:=[X_j,Y_j]\}$, $j=1,\dots, r$, are pairwise commuting subalgebras of $\lieG$ isomorphic to $\lieSO_3(\R).$ The orbits through $\xi$ of the corresponding subgroups of $\Ad(G)$ are round 2-spheres as they are complex and thus orientable submanifolds. Their product is called a \emph{polysphere} and is not only a totally geodesic submanifold of $X$ (yielding \textsc{Harish-Chandra}'s polysphere theorem, see \cite{Wolf-Hermitian}), but also extrinsically a product of round 2-spheres in pairwise orthogonal $3$-spaces. Therefore the product of great circles passing through $\xi$ in each of these 2-spheres gives a totally geodesic Clifford torus in $X$ and thus proves Theorem \ref{THM: Maximal tori of ess are Clifford} in this special case.\par
 According to \textsc{Ferus} \cite{Feru-80} (cf.\ \cite{EH-95} for a different proof) every compact, connected, extrinsically symmetric space is congruent to an orbit $Y:=\Ad_G(K')\xi$ in $\lieP'.$ Here $(G,K')$ is a symmetric pair with $G$ compact and semisimple, $\lieG=\lieK'\oplus\lieP'$ is the
 corresponding decomposition of the Lie algebra of $G$ and $\xi$ is an element of $\lieP'$ with $\ad(\xi)^3=-\ad(\xi).$
 In particular, $Y$ is contained in $X=\Ad(G)\xi.$
 \textsc{Takeuchi} \cite{Take-65} has noticed that $Y$ is
 a connected component of the fixed point set of $-\tau|_X:X\to X,$ where $\tau$ is the involution of $\lieG$ with eigenspaces $\lieK'$
 and $\lieP'.$ Thus Theorem \ref{THM: Maximal tori of ess are Clifford} follows for all compact extrinsically symmetric spaces by Proposition \ref{PROP: fixed sets isometries Clifford type}.\par
  In \cite[Thm.\ 1, p.\ 158 and Thm.\ 5, p.\ 164]{Take-65}  \textsc{Takeuchi} has used the ideas sketched above
  to prove that symmetric $R$-spaces have rectangular unit lattices,
  an essential step for the proof of Theorem \ref{THM: Maximal tori of ess are Clifford}.
\end{remark}



\end{document}